\renewcommand{\thefootnote}
\title*{On asymptotic structure of the critical
Galton-Watson Branching Processes
with infinite variance and Immigration}
\titlerunning{\it Galton-Watson Branching Processes
with Immigration}
\author{Azam A.~Imomov \and Erkin E.~Tukhtaev}
\authorrunning{\it A.~Imomov \and E.~Tukhtaev}
\institute{
Department of Mathematics, Karshi State University, \\ 17, Kuchabag street, 180100 Karshi city, Uzbekistan\\
(E-mail: {\tt imomov{\_}\,azam@mail.ru, tukhtaev{\_}\,erkin@mail.ru})}
\begin{document}
\thispagestyle{empty}
\maketitle
\setlength{\leftskip}{0pt}
\setlength{\headsep}{16pt}
\begin{abstract}
    We observe the Galton-Watson Branching Processes.
    Limit properties of transition functions and their convergence to
    invariant measures are investigated.
\keyword{Branching process, Immigration, Transition
    probabilities, Slow variation,  Invariant measures.}
\end{abstract}

\section{Introduction}

    Let $\left\{{X_n}, n \in \mathbb{N}_0 \right\}$ be the Galton-Watson Branching
    Process allowing Immigration (GWPI), where $\mathbb{N}_0 = \left\{0 \right\} \cup \mathbb{N}$
    and $\mathbb{N}= \left\{{1,2, \ldots} \right\}$. This is a homogeneous Markov chain with state
    space $\mathcal{S}\subset\mathbb{N}_0$ and whose transition probabilities are
\begin{equation*}
    p_{ij} = \textrm{coefficient of} \;\; s^{j} \;\; \textrm{in} \;\; h(s)\bigl( f(s)\bigr)^{i},
    \quad \parbox{2.4cm}{ {} $s \in [0, 1)$,}
\end{equation*}
    where $h(s)= \sum\nolimits_{j \in \mathcal{S}}{h_j s^j}$ and
    $f(s)= \sum\nolimits_{j \in \mathcal{S}}{p_j s^j}$ are probability generating functions (PGF's).
    The variable $X_n$ is interpreted as the population size in GWPI at the moment $n$.
    An evolution of the process will occurs by following scheme. An initial state is empty
    that is $X_0  = 0$ and the process starts owing to immigrants. Each individual at time
    $n$ produces $j$ progeny with probability $p_j$ independently of each other so that
    $p_0 >0$. Simultaneously in the population $i$ immigrants arrive with probability
    $h_i$ in each moment $n \in \mathbb{N}$. These individuals undergo further
    transformation obeying the reproduction law $\left\{{p_j}\right\}$ and $n$-step
    transition probabilities $p_{ij}^{(n)}:={\textsf{P}}\left\{{\left.{X_{n+k}=j}
    \right|X_k =i}\right\}$ for any $k \in \mathbb{N}$ are given by
\begin{equation}         \label{1}
    {\mathcal P}_{n}^{(i)}(s):= \sum\limits_{j \in {\mathcal S}}{p_{ij}^{(n)}s^j}
    = \bigl({f_n(s)}\bigr)^i \prod\limits_{k= 0}^{n-1}{h\bigl({f_k (s)}\bigr)}
    \quad \parbox{3cm}{\textit{for any} {} $i \in \mathcal{S}$,}
\end{equation}
    where $f_n (s)$ is $n$-fold iteration of PGF $f(s)$; see for example {\cite{Pakes79}}.
    Thus the transition probabilities $\left\{{p_{ij}^{(n)}} \right\}$ are completely
    defined by the probabilities $\left\{ {p_j } \right\}$ and $\left\{ {h_j } \right\}$.

    Classification of states of the chain $\left\{ {X_n } \right\}$ is one of fundamental
    problems in theory of GWPI. Direct differentiation of \eqref{1} gives
\begin{eqnarray*}
    {\textsf{E}} \bigl[{\left. {X_n} \right|X_0 = i} \bigr]
    = \left\{\begin{array}{l} \displaystyle {a} n + i  \hfill, \quad \parbox{2.8cm} {\textit{when} {} $m = 1 $,}    \\
    \\
    \displaystyle \left({{\displaystyle {a}
    \over \displaystyle {m - 1}} + i} \right)m^{n} -  {\displaystyle {a}
    \over \displaystyle {m - 1}} \;  \hfill, \quad \parbox{2.8cm} {\textit{when} {} $m \neq 1 $,}
    \end{array} \right.
\end{eqnarray*}
    where $m = f'(1-)$ is mean per-capita offspring number and $a = h'(1-)$. The received
    formula for ${\textsf{E}} \bigl[{\left. {X_n} \right|X_0 = i} \bigr]$ shows that classification
    of states of GWPI depends on a value of the parameter $m$. Process $\left\{{X_n} \right\}$
    is classified as sub-critical, critical and supercritical if $m < 1$, $m = 1$ and $m > 1$ accordingly.

    The above described population process was considered first by Heathcote {\cite{He65}} in 1965.
    Further long-term properties of $\mathcal{S}$ and a problem of existence and uniqueness of invariant
    measures of GWPI were investigated by Seneta {\cite{Sen69}},
    Pakes {\cite{Pakes71a}}, {\cite{Pakes71b}} and by many
    other authors. Therein some moment conditions for PGF $f(s)$ and $h(s)$ was required to be
    satisfied. In aforementioned works of Seneta the ergodic properties of $\left\{{X_n }\right\}$
    were investigated. He has proved that when $m \le 1$ the process $\left\{ {X_n } \right\}$
    has an invariant measure $\left\{{\mu _k,\;k \in {\mathcal S}}\right\}$ which is unique
    up to multiplicative constant. Pakes {\cite{Pakes71b}} have shown that in
    supercritical case $ {\mathcal S}$ is transient. In the critical case ${\mathcal S}$ can
    be transient, null-recurrent or ergodic.  In this case, if in addition to assume  that
    $2b: = f''(1-) < \infty $, properties of $ {\mathcal S}$ depend on value of parameter
    $\lambda = {a \mathord{\left/ {\vphantom {a {b}}}\right. \kern-\nulldelimiterspace} b}$:
    if $\lambda > 1$ or $\lambda  < 1$, then $ {\mathcal S}$ is transient or null-recurrent
    accordingly. In the case when $\lambda = 1$, Pakes {\cite{Pakes71a}}
    studied necessary and sufficient conditions for a null-recurrence property. Limiting distribution
    law for critical process $\left\{ {X_n } \right\}$ was found first by Seneta {\cite{Sen70}}.
    He has proved that the normalized process ${{X_n}\mathord{\left/{\vphantom {{X_n}{\left({b}{n}\right)}}}
    \right. \kern-\nulldelimiterspace} {\left({b}{n}\right)}}$ has limiting Gamma distribution
    with density function ${\Gamma ^{-1}(\lambda )} x^{\lambda - 1}e^{-x}$ provided that
    $0 < \lambda < \infty$, where $x > 0$ and $\Gamma ( * )$ is Euler's Gamma function.
    This result  has been established also by Pakes {\cite{Pakes71a}} without reference to Seneta.
    Afterwards Pakes {\cite{Pakes79}}, {\cite{PakesMB75}}, has obtained
    principally new results for all cases $m < \infty$ and $b =\infty$.

    Throughout the paper we keep on the critical case only and $b =\infty$. Our reasoning will bound
    up with elements of slow variation theory in sense of Karamata; see {\cite{SenetaRV}}. Remind
    that real-valued, positive and measurable function $L(x)$ is  said to be slowly varying (SV) at
    infinity if ${{L (\lambda x)} \mathord{\left/ {\vphantom {{L (\lambda x)}{L (x)}}}\right.
    \kern-\nulldelimiterspace}{L (x)}}\to 1$ as $x \to \infty$  for each $\lambda > 0$. We refer
    the  reader to {\cite{AsHer}}, {\cite{Bingham}} and {\cite{SenetaRV}} for more information.

    In second section we study invariant measures of the simple Galton-Watson (GW) Process.
    In third section the invariant properties of GWPI will be investigated.

\section{Invariant measures of GW Process}

    Let $\left\{{Z_n, n\in \mathbb{N}_0}\right\}$ be the simple GW Branching Process
    without immigration given by offspring PGF $f(s)$.
    Discussing this case we will assume that the
    offspring PGF $f(s)$ has the following representation:
\begin{equation*}
    f(s) = s + (1 - s)^{1 + \nu} \mathcal{L}\left( {{\,1\,
    \over {1 - s}}}\right),   \eqno[\textsf {$f_\nu$}]
\end{equation*}
    where $0 < \nu \leq 1$ and $\mathcal{L}(x)$ is SV at infinity. By the
    criticality of the process the condition $\left[{f_\nu} \right]$ implies that
    $b =\infty$. This includes the case $b<\infty$ when $\nu =1$ and
    $\mathcal{L}(t) \rightarrow {b}$ as $t \to \infty$.

    Consider PGF ${f_{n}(s)}:= \textsf{E}\left[s^{Z_n}
    \left| {Z_0 =1}\right.\right]$ and write ${R_{n}(s)}:= 1-f_n(s)$. Evidently ${Q_{n}}:=R_{n}(0)$
    is the survival probability of the process. By arguments of Slack {\cite{Slack72}} one can
    be shown that if the condition $\left[{f_\nu} \right]$ holds then
\begin{equation}              \label{2}
    {Q_n^{\nu}} \cdot \mathcal{L} \left( {{{1} \over {{Q_n}}}} \right) \sim {{1} \over {\nu n}}
    \quad \parbox{2.4cm} {\textit{as} {} $n \to \infty$.}
\end{equation}
    Slack {\cite{Slack72}} also has shown that
\begin{equation}              \label{3}
    {\mathcal U}_n (s): = {{f_n (s) - f_n (0)} \over {f_n (0) - f_{n - 1} (0)}} \longrightarrow U(s)
\end{equation}
    for $s \in [0, 1)$, where the limit function $U(s)$ satisfies the Abel equation
\begin{equation}              \label{4}
    U\left( f(s) \right) = U(s) + 1,
\end{equation}
    so that $U(s)$ is PGF of invariant measure for the GW process $\left\{{Z_n}\right\}$.
    Combining $\left[{f_\nu} \right]$, \eqref{2} and \eqref{3} and considering
    properties of the process $\left\{{Z_n}\right\}$ we have
\begin{equation*}
    {\mathcal U}_n (s) \sim {U}_n (s) :=\left[ {1 - {{R_n (s)} \over {Q_ n}}} \right]{\nu {n}}
    \quad \parbox{2.4cm}{\textit{as} {} $n \to \infty$.}
\end{equation*}
    So we proved the following lemma.

\begin{lemma}                 \label{MyLem:1}
    If the condition $\left[{f_\nu} \right]$ holds then
\begin{equation}              \label{5}
    R_n (s) = {{{\mathcal N}\left(n \right)} \over
    {\left(\nu n \right)^{{1 \mathord{\left/ {\vphantom {1 \nu }} \right.
    \kern-\nulldelimiterspace} \nu }} }} \cdot
    \left[ {1 - {{U_n (s)} \over {\nu n}}} \right],
\end{equation}
    where the function ${\mathcal N}(x)$ is SV at infinity and
\begin{equation}              \label{6}
    {\mathcal N}(n) \cdot \mathcal{L}^{{1 \mathord{\left/ {\vphantom {1 \nu }}
    \right. \kern-\nulldelimiterspace} \nu }} \left( {{{(\nu n)^{{1 \mathord{\left/
    {\vphantom {1 \nu }} \right. \kern-\nulldelimiterspace} \nu }}}
    \over {{\mathcal N}(n)}}} \right) \longrightarrow 1
    \quad \parbox{2.4cm} {\textit{as} {} $n \to \infty$,}
\end{equation}
    and the function $U_n (s)$ enjoys following properties:
\begin{itemize}
\item  $U_n (s) \longrightarrow {U}(s)$ as $n \to \infty$ so that the equation \eqref{4} holds;
\item  $ \lim _{s \uparrow 1} U_n (s) = \nu n$ for each  fixed $n \in \mathbb{N}$;
\item  $ U_n (0) = 0$ for each  fixed $n \in \mathbb{N}$.
\end{itemize}
\end{lemma}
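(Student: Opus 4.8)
The plan is to read the representation \eqref{5} directly off the identity established just before the statement, namely $\mathcal{U}_n(s)\sim U_n(s)$ with $U_n(s)=\left[1-R_n(s)/Q_n\right]\nu n$, and then to check \eqref{6} and the three listed properties separately. First I would simply invert the defining relation for $U_n(s)$: solving $U_n(s)=\left[1-R_n(s)/Q_n\right]\nu n$ for $R_n(s)$ yields $R_n(s)=Q_n\left[1-U_n(s)/(\nu n)\right]$. Hence it suffices to put $\mathcal{N}(n):=Q_n\,(\nu n)^{1/\nu}$, for then $Q_n=\mathcal{N}(n)/(\nu n)^{1/\nu}$ and \eqref{5} follows verbatim. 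This reduces the whole lemma to showing that this particular $\mathcal{N}(n)$ is slowly varying and satisfies \eqref{6}.

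The relation \eqref{6} I would obtain by inserting $Q_n=\mathcal{N}(n)/(\nu n)^{1/\nu}$ into Slack's asymptotic \eqref{2}. Rewriting \eqref{2} as $(\nu n)\,Q_n^{\nu}\,\mathcal{L}(1/Q_n)\to 1$ and substituting $Q_n^\nu=\mathcal{N}(n)^\nu/(\nu n)$ together with $1/Q_n=(\nu n)^{1/\nu}/\mathcal{N}(n)$ cancels the factor $\nu n$ and leaves $\mathcal{N}(n)^\nu\,\mathcal{L}\bigl((\nu n)^{1/\nu}/\mathcal{N}(n)\bigr)\to 1$; raising to the power $1/\nu$ is precisely \eqref{6}.

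The analytic heart of the argument, and the step I expect to be the main obstacle, is proving that $\mathcal{N}(n)$ is SV. Here I would appeal to the inversion theory for regularly varying functions: near $0$ the map $x\mapsto x^{\nu}\mathcal{L}(1/x)$ is regularly varying of index $\nu$, so \eqref{2} identifies $Q_n$, up to asymptotic equivalence, with the (de Bruijn) asymptotic inverse evaluated at $\nu n$, which is regularly varying in $n$ of index $-1/\nu$. Consequently $\mathcal{N}(n)=Q_n(\nu n)^{1/\nu}$ has index $0$, that is, it is SV; equivalently the existence and slow variation of a solution of the implicit relation \eqref{6} can be read off from the theory of conjugate slowly varying functions in \cite{SenetaRV}.

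Finally the three properties of $U_n$ are essentially immediate. Since $R_n(0)=Q_n$, evaluating $U_n(s)=\left[1-R_n(s)/Q_n\right]\nu n$ at $s=0$ gives $U_n(0)=0$; and since $R_n(s)=1-f_n(s)\to 0$ as $s\uparrow 1$ (because $f_n(1)=1$), we get $\lim_{s\uparrow 1}U_n(s)=\nu n$. For the convergence $U_n(s)\to U(s)$ I would combine $\mathcal{U}_n(s)\to U(s)$ from \eqref{3} with the established equivalence $\mathcal{U}_n(s)\sim U_n(s)$: on $(0,1)$, where the finite limit $U(s)$ is positive, writing $U_n(s)=\mathcal{U}_n(s)\cdot\bigl(U_n(s)/\mathcal{U}_n(s)\bigr)$ transfers the limit, so $U_n(s)\to U(s)$, and the Abel equation \eqref{4} holds for this limit by Slack's result.
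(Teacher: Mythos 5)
Your proposal is correct and follows essentially the same route as the paper: the paper's "proof" is precisely the pre-lemma paragraph deriving $\mathcal{U}_n(s)\sim U_n(s)=\left[1-R_n(s)/Q_n\right]\nu n$ from $\left[f_\nu\right]$, \eqref{2} and \eqref{3}, with the implicit identification $\mathcal{N}(n)=Q_n(\nu n)^{1/\nu}$ (forced by \eqref{5} at $s=0$), so that \eqref{6} is just Slack's asymptotic \eqref{2} rewritten. Your write-up merely makes explicit the details the paper suppresses — the substitution yielding \eqref{6}, the de Bruijn/asymptotic-inverse argument for slow variation of $\mathcal{N}$, and the transfer of the limit from $\mathcal{U}_n$ to $U_n$ — all consistent with the paper's intent.
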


    Evidently that this lemma is generalization of \eqref{2} and herein it
    established by more simple proof rather than as shown in {\cite{Imomov18}}.

    Further writing $\Lambda (y) = y^\nu {\mathcal L}\left({{1 \mathord
    {\left/{\vphantom {1 {y}}} \right. \kern-\nulldelimiterspace} y}} \right)$
    we consider the function
\begin{equation}              \label{7}
    {\mathcal M}_n (s):=1-{{\Lambda \bigl({R_n(s)}\bigr)}\over {\Lambda \left({Q_n}\right)}}.
\end{equation}
    It follows from \eqref{5} and from the properties of SV-function that
\begin{eqnarray*}
    {\mathcal M}_n (s)
    & = & 1-\left( {{{R_n(s)} \over {Q_n}}} \right)^\nu {{{\mathcal L}\bigl({{1 \mathord{\left/
    {\vphantom {1 {R_n (s)}}} \right.\kern-\nulldelimiterspace} {R_n (s)}}} \bigr)} \over
    {{\mathcal L}\bigl({{1 \mathord{\left/ {\vphantom {1 {Q_n}}}
    \right. \kern-\nulldelimiterspace}{Q_n}}}\bigr)}}            \\
\\
    & \sim &  1 - \left( {1 - {{U_n (s)} \over \nu n}} \right)^\nu
    \sim {{U_n (s)} \over n}\bigl( {1 + \rho _n (s)} \bigr)
    \quad \parbox{2.4cm} {\textit{as} {} $n \to \infty$,}
\end{eqnarray*}
    where $\rho _n (s) = {\mathcal O}\bigl( {{1 \mathord{\left/ {\vphantom {{1}{n}}} \right.
    \kern-\nulldelimiterspace} n}} \bigr)$ uniformly for all $s \in [0,1)$.

    Thus we obtain the following assertion.

\begin{lemma}            \label{MyLem:2}
    If the condition $\left[{f_\nu} \right]$ holds then
\begin{equation}              \label{8}
    n \cdot {\mathcal M}_n (s) \longrightarrow U (s)
    \quad \parbox{2.4cm} {\textit{as} {} $n \to \infty$,}
\end{equation}
    where $U(s)$ is PGF of invariant measure of GW Process.
\end{lemma}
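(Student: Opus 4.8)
The plan is to read the limit off from the defining identity for $\mathcal{M}_n(s)$ together with the representation \eqref{5} of Lemma~\ref{MyLem:1}; the whole argument amounts to making the two equivalences displayed just before the statement precise and then multiplying through by $n$. First I would insert $\Lambda(y)=y^\nu\mathcal{L}(1/y)$ into the definition \eqref{7}. Evaluating \eqref{5} at $s=0$ and using $U_n(0)=0$ identifies $Q_n=R_n(0)=\mathcal{N}(n)/(\nu n)^{1/\nu}$, so the common prefactor cancels and the ratio becomes \emph{exact},
\begin{equation*}
    \frac{R_n(s)}{Q_n}=1-\frac{U_n(s)}{\nu n}.
\end{equation*}
Substituting this back gives
\begin{equation*}
    \mathcal{M}_n(s)=1-\left(1-\frac{U_n(s)}{\nu n}\right)^{\!\nu}\,
    \frac{\mathcal{L}\left(1/R_n(s)\right)}{\mathcal{L}\left(1/Q_n\right)},
\end{equation*}
which is precisely the first line of the computation preceding the lemma.

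Next I would treat the regularly varying and the slowly varying factors separately. Fix $s\in[0,1)$. By the first property in Lemma~\ref{MyLem:1}, $U_n(s)\to U(s)<\infty$, hence $U_n(s)/(\nu n)\to0$ and the binomial expansion gives $\left(1-U_n(s)/(\nu n)\right)^{\nu}=1-U_n(s)/n+O(1/n^{2})$. At the same time $R_n(s)/Q_n\to1$ while $1/Q_n\to\infty$, so the slowly varying quotient $\mathcal{L}(1/R_n(s))/\mathcal{L}(1/Q_n)\to1$. Combining the two estimates reproduces $\mathcal{M}_n(s)=\left(U_n(s)/n\right)\left(1+\rho_n(s)\right)$, and therefore
\begin{equation*}
    n\,\mathcal{M}_n(s)=U_n(s)\left(1+\rho_n(s)\right)\longrightarrow U(s),
\end{equation*}
since $U_n(s)\to U(s)$ and $\rho_n(s)=O(1/n)\to0$; that the limit $U(s)$ is the PGF of an invariant measure is already recorded through the Abel equation \eqref{4} in Lemma~\ref{MyLem:1}.

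The genuine difficulty is not the algebra but the \emph{rate} at which the slowly varying factor tends to $1$. Because it is multiplied by $n$, a relative error $\varepsilon_n:=\mathcal{L}(1/R_n(s))/\mathcal{L}(1/Q_n)-1$ contributes $n\varepsilon_n$ to $n\,\mathcal{M}_n(s)$, so one must establish the sharper statement $\varepsilon_n=o(1/n)$ rather than merely $\varepsilon_n\to0$. I would extract this from the Karamata representation of $\mathcal{L}$ together with two facts: $1/Q_n\to\infty$ (because $Q_n\downarrow0$) and $1/R_n(s)=\lambda_n/Q_n$ with $\lambda_n=\left(1-U_n(s)/(\nu n)\right)^{-1}=1+O(1/n)$. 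Over the short multiplicative interval $[1/Q_n,\,1/R_n(s)]$, whose logarithmic length is $\log\lambda_n=O(1/n)$, the slowly varying function changes by a factor $1+o(1)\cdot\log\lambda_n=1+o(1/n)$, since its local variation tends to $0$ as the argument grows. This $o(1/n)$ rate is exactly the quantitative content hidden in the uniform bound $\rho_n(s)=O(1/n)$ asserted before the statement, and verifying it rigorously — in particular, controlling the asymptotically constant prefactor in the representation uniformly in $s$ — is the step I expect to demand the most care.
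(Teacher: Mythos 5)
Your core computation is, step for step, the paper's own proof: the paper establishes Lemma~\ref{MyLem:2} exactly by the displayed chain preceding the statement, i.e.\ inserting $\Lambda(y)=y^{\nu}\mathcal{L}(1/y)$ into \eqref{7}, using the exact identity ${R_n(s)}/{Q_n}=1-U_n(s)/(\nu n)$ that follows from \eqref{5} and $U_n(0)=0$, discarding the slowly varying ratio, and expanding the $\nu$-th power. Where you go beyond the paper is in isolating the point on which everything hinges: since $\mathcal{M}_n(s)$ is of order $1/n$ and is multiplied by $n$, the factor $B_n:=\mathcal{L}\bigl(1/R_n(s)\bigr)/\mathcal{L}\bigl(1/Q_n\bigr)$ must satisfy $\varepsilon_n:=B_n-1=o(1/n)$, not merely $\varepsilon_n\to0$. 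That observation is correct, and the paper's first ``$\sim$'' (equivalently its claim $\rho_n(s)=\mathcal{O}(1/n)$) silently asserts precisely this rate with no justification beyond ``properties of SV-functions''.

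The trouble is that your proposed repair of this step does not work, and under the lemma's stated hypothesis ($[f_\nu]$ alone) it cannot be made to work, because slow variation is a rate-free property. In the Karamata representation $\mathcal{L}(x)=c(x)\exp\bigl\{\int_a^x \eta(t)t^{-1}dt\bigr\}$, your bound ``change $=o(1)\cdot\log\lambda_n=o(1/n)$'' is valid for the exponential factor, since $\bigl|\int_{x_n}^{\lambda_n x_n}\eta(t)t^{-1}dt\bigr|\le\sup_{t\ge x_n}|\eta(t)|\cdot\log\lambda_n$. But the prefactor $c(x)$ --- exactly the term you set aside as ``demanding the most care'' --- is only known to converge to a positive constant, with no rate whatsoever, and it can dominate: take for instance $\mathcal{L}(x)=b\bigl(1+\sin(\log^2 x)/\sqrt{\log x}\,\bigr)$, which is slowly varying (it even converges to $b$), while with $\log(1/Q_n)\sim\nu^{-1}\log n$ and $\log\lambda_n\asymp 1/n$ one finds $\varepsilon_n\asymp\cos\bigl(\log^2(1/Q_n)\bigr)\sqrt{\log n}\,/n$, so that $n\varepsilon_n$ oscillates unboundedly. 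Hence no argument from slow variation alone can deliver $o(1/n)$; the missing ingredient is exactly the remainder hypothesis the paper introduces later for its theorems, namely $\left[\mathcal{L}_{\alpha}\right]$ together with \eqref{11}, $\alpha(x)=o\bigl(\mathcal{L}(x)/x^{\nu}\bigr)=o\bigl(\Lambda(1/x)\bigr)$, which evaluated at $x\asymp 1/Q_n$ is precisely $\varepsilon_n=o(1/n)$. Under that added assumption (or for normalized, Zygmund-class $\mathcal{L}$, where $c$ is constant) your argument closes; without it, your proof and the paper's share the same genuine gap. If one insists on $[f_\nu]$ alone, the statement that actually is provable replaces $1/(\nu\Lambda)$ by its Karamata integral: the uniform convergence theorem gives, with no rate needed,
\begin{equation*}
\int_{R_n(s)}^{Q_n}\frac{dt}{t\,\Lambda(t)}
=\frac{Q_n-R_n(s)}{Q_n\,\Lambda(Q_n)}\bigl(1+o(1)\bigr)
=\frac{U_n(s)}{\nu n\,\Lambda(Q_n)}\bigl(1+o(1)\bigr)\longrightarrow U(s)
\end{equation*}
by \eqref{2} and Lemma~\ref{MyLem:1}; but converting this back into the ratio of $\Lambda$'s appearing in \eqref{7} again costs exactly the remainder condition. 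So you should either assume \eqref{11} explicitly in the lemma or reformulate $\mathcal{M}_n$ through the integral; ``care'' with the prefactor cannot substitute for the missing hypothesis.
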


    In the following Lemma we find out an explicit form of PGF of $U(s)$. Write
\begin{equation*}
    {\mathcal V}(s) = {1 \over {\nu \Lambda \left( {1 - s} \right)}}.
\end{equation*}

\begin{lemma}            \label{MyLem:3}
    If the condition $\left[{f_\nu} \right]$ holds then
\begin{equation}              \label{9}
    U(s) = {\mathcal V}(s) - {\mathcal V}(0).
\end{equation}
\end{lemma}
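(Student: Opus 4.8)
The plan is to evaluate the limit in Lemma~\ref{MyLem:2} explicitly: since $n\cdot\mathcal{M}_n(s)\to U(s)$ by \eqref{8}, it suffices to compute $\lim_{n\to\infty} n\,\mathcal{M}_n(s)$ in closed form, and the whole computation is driven by the iteration of $R_n(s)$. First I would record the exact one-step recursion for $R_n(s)=1-f_n(s)$. Writing the condition $\left[{f_\nu}\right]$ in the equivalent form $1-f(y)=(1-y)\bigl[1-\Lambda(1-y)\bigr]$ and substituting $y=f_{n-1}(s)$ gives
\[
R_n(s)=R_{n-1}(s)\bigl[1-\Lambda\bigl(R_{n-1}(s)\bigr)\bigr],\qquad R_0(s)=1-s.
\]

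Next I would apply $\Lambda$ to both sides. Since $\Lambda(y)=y^\nu\mathcal{L}(1/y)$ and the ratio $R_n(s)/R_{n-1}(s)=1-\Lambda(R_{n-1}(s))\to1$, the slow variation of $\mathcal{L}$ yields $\Lambda(R_n(s))\sim\Lambda(R_{n-1}(s))\bigl[1-\Lambda(R_{n-1}(s))\bigr]^\nu$. Setting $t_k:=\Lambda(R_k(s))\to0$ and using $(1-t)^{-\nu}=1+\nu t+O(t^2)$, this reduces to $\tfrac{1}{t_n}-\tfrac{1}{t_{n-1}}\to\nu$. Introducing $\mathcal{V}_n(s):=1/\bigl(\nu\,\Lambda(R_n(s))\bigr)$, and noting that $\mathcal{V}_0(s)=\mathcal{V}(s)$, telescoping this increment gives $\mathcal{V}_n(s)=\mathcal{V}(s)+n+o(1)$, and in particular $\mathcal{V}_n(s)-\mathcal{V}_n(0)\to\mathcal{V}(s)-\mathcal{V}(0)$.

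Finally I would substitute back into $\mathcal{M}_n$. Because $\Lambda(R_n(s))=1/\bigl(\nu\mathcal{V}_n(s)\bigr)$ and $\Lambda(Q_n)=1/\bigl(\nu\mathcal{V}_n(0)\bigr)$, definition \eqref{7} becomes $\mathcal{M}_n(s)=1-\mathcal{V}_n(0)/\mathcal{V}_n(s)=\bigl[\mathcal{V}_n(s)-\mathcal{V}_n(0)\bigr]/\mathcal{V}_n(s)$, whence
\[
n\,\mathcal{M}_n(s)=\frac{n\,\bigl[\mathcal{V}_n(s)-\mathcal{V}_n(0)\bigr]}{\mathcal{V}_n(s)}\longrightarrow \mathcal{V}(s)-\mathcal{V}(0),
\]
since $\mathcal{V}_n(s)\sim n$. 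Comparing with \eqref{8} then proves \eqref{9}.

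The main obstacle is the telescoping step. The per-step increment $\tfrac{1}{t_n}-\tfrac{1}{t_{n-1}}$ tends to $\nu$, but the individual errors are only $o(1)$, so a crude summation would wash out the additive constant $\mathcal{V}(s)$ that carries the whole conclusion. The errors arise from the factor $(1-t_k)^\nu$ and from the slowly varying ratio $\mathcal{L}(1/R_k)/\mathcal{L}(1/R_{k-1})$; both must be controlled uniformly, using \eqref{2} (which pins down $t_k\sim 1/(\nu k)$) together with the uniform convergence theorem for slowly varying functions. The key point is that these contributions are, to leading order, independent of $s$ and therefore cancel in the difference $\mathcal{V}_n(s)-\mathcal{V}_n(0)$, leaving exactly $\mathcal{V}(s)-\mathcal{V}(0)$; turning this cancellation into a rigorous estimate, rather than merely invoking $t_k\sim 1/(\nu k)$ for each fixed $s$, is where the care is required.
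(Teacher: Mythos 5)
You follow the same route as the paper's own proof: the one-step recursion $R_n(s)=R_{n-1}(s)\bigl[1-\Lambda\bigl(R_{n-1}(s)\bigr)\bigr]$, the increment limit $\mathcal{V}_n(s)-\mathcal{V}_{n-1}(s)\to 1$ for $\mathcal{V}_n(s):=1/\bigl(\nu\Lambda(R_n(s))\bigr)$ (the paper imports exactly this from {\cite[p.~401]{Bingham}}), telescoping, and identification of the limit through Lemma~\ref{MyLem:2} via $\mathcal{M}_n(s)=\bigl[\mathcal{V}_n(s)-\mathcal{V}_n(0)\bigr]/\mathcal{V}_n(s)$. Up to that point your algebra is correct and faithful to the paper.

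The genuine gap is the one you flag yourself, and your proposed repair does not work. Put $d_k(s):=\mathcal{V}_{k+1}(s)-\mathcal{V}_k(s)-1$, so that
\begin{equation*}
    \mathcal{V}_n(s)-\mathcal{V}_n(0)=\mathcal{V}(s)-\mathcal{V}(0)
    +\sum_{k=0}^{n-1}\bigl[d_k(s)-d_k(0)\bigr].
\end{equation*}
What you must prove is that the sum tends to $0$; your argument for this is that the errors are ``to leading order independent of $s$''. But the discrepancy lives in the \emph{early} terms, which are order-one and genuinely $s$-dependent, and nothing makes them cancel. Take binary branching, $f(s)=(1+s^2)/2$, which satisfies $[f_\nu]$ with $\nu=1$, $\mathcal{L}\equiv 1/2$; here there is no slowly-varying-ratio error at all (so \eqref{2} and Lemma~\ref{MyLem:2} hold exactly), and $\mathcal{V}(s)=2/(1-s)$. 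A direct computation gives $\mathcal{V}_{k+1}(s)-\mathcal{V}_k(s)=2/\bigl(1+f_k(s)\bigr)$, hence $d_0(s)-d_0(0)=-2s/(1+s)\neq 0$, and $\sum_k\bigl[d_k(s)-d_k(0)\bigr]$ converges to a limit that is not zero. Indeed, for this $f$ the conclusion \eqref{9} itself is false: it would give $U(s)=2s/(1-s)$, which violates the Abel equation \eqref{4}, because $\mathcal{V}(f(s))-\mathcal{V}(s)=2/(1+s)\not\equiv 1$. So no refinement of your estimates can close the gap under $[f_\nu]$ alone; the sum above equals $U(s)-\bigl[\mathcal{V}(s)-\mathcal{V}(0)\bigr]$, and assuming it vanishes is assuming the lemma. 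You should be aware that the paper's own proof has the same leak --- the factor $1+o(1)$ in \eqref{10} becomes an uncontrolled $n\cdot o(1)$ term once \eqref{10} is combined with \eqref{7} and \eqref{8} --- so \eqref{9} can only be read asymptotically as $s\uparrow 1$, or under remainder hypotheses on $\mathcal{L}$ of the kind the paper imposes later in $[\mathcal{L}_\alpha]$ and \eqref{11}; as a proof of the exact identity, both your write-up and the original contain the same unfixable step.
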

\begin{proof}
    In pursuance of reasoning from {\cite[p.~401]{Bingham}} we obtain the following relation:
\begin{equation*}
    {\mathcal V}\bigl({f_{n+1}(s)} \bigr)-{\mathcal V}\bigl({f_n(s)} \bigr) \longrightarrow 1
    \quad \parbox{2.4cm} {\textit{as} {} $n \to \infty$.}
\end{equation*}
    Thence summing by $n$ we find
\begin{equation*}
    {\mathcal V}\bigl( {f_n (s)} \bigr) - {\mathcal V}(s) = n\cdot \bigl( {1 + o(1)} \bigr)
    \quad \parbox{2.4cm} {\textit{as} {} $n \to \infty$.}
\end{equation*}
    Keeping our designation we easily will transform last equality to a form of
\begin{equation}              \label{10}
    \Lambda \bigl( {R_n (s)} \bigr) = {{\Lambda \left( {1 - s} \right)} \over
    {\Lambda \left( {1 - s} \right)\nu n + 1}}\bigl( {1 + o(1)} \bigr)
    \quad \parbox{2.4cm} {\textit{as} {} $n \to \infty$.}
\end{equation}
    Combining \eqref{7}, \eqref{8} and \eqref{10} we reach \eqref{9}.
\end{proof}

\section{Invariant measures of GWPI}

    Consider GWPI. Pakes {\cite{PakesMB75}} has proved the following theorem.

\medskip

\noindent{{\textbf{Theorem P1}} {\cite{PakesMB75}}.}
    {\it If $m = 1$ then}
\begin{equation*}
    p_{00}^{(n)} \sim K\exp \left\{{\int\limits_{1}^{e^n}
    {{{\ln h\left({1 - \varphi (y)} \right)} \over y}\,dy}} \right\}
    \quad \parbox{2.4cm}{\textit{as} {} $n  \to \infty$,}
\end{equation*}
    {\it where $\varphi (y)$ is decreasing SV-function. If}
\begin{equation*}
    \sum\limits_{m = 0}^\infty  {\Bigl[ {\left( {1 - h(f_m (0)}
    \right)\left( {1 - f'(f_m (0)} \right)} \Bigr]} < \infty,
\end{equation*}
    {\it then }
\begin{equation*}
    p_{00}^{(n)}  \sim K_1 \exp \left\{ {\int\limits_{0}^{f_n (0)}
    {{{\ln h(y)} \over {f(y) - y}}\,dy} } \right\}
    \quad \parbox{2.4cm}{\textit{as} {} $n  \to \infty$.}
\end{equation*}
    {\it Herein $K$ and $K_1 $ are some constants.}

\medskip

    Since this point we everywhere will consider the case that immigration
    PGF $h(s)$ has the following form:
\begin{equation*}
    1 - h(s) = (1 - s)^{\delta} \ell \left( {{\,1\,
    \over {1 - s}}}\right),   \eqno[\textsf {$h_\delta$}]
\end{equation*}
    where $0 < \delta \leq 1$ and $\ell(x)$ is SV at infinity.

    Our results appear provided that conditions $\left[{f_\nu} \right]$
    and $\left[{h_\delta} \right]$ hold and $\delta > \nu$. As it has been shown
    in {\cite{PakesMB75}} that in this case $\mathcal{S}$ is ergodic.
    Namely we improve statements of Theorem P1. Herewith we put forward
    an additional requirement concerning $\mathcal{L}(x)$ and ${\ell}(x)$.
    So since $\mathcal{L}(x)$ is SV we can write
\begin{equation*}
    {{\mathcal{L}\left( {\lambda x} \right)} \over {\mathcal{L}(x)}}
    = 1 + \alpha(x)     \eqno[\textsf {$\mathcal{L}_{\alpha}$}]
\end{equation*}
    for each $\lambda > 0$, where $\alpha (x) \to 0$ as $x \to \infty $. Henceforth we
    suppose that some positive function $g(x)$ is given so that $g(x) \to 0$ and
    $\alpha (x)={o}\bigl(g(x)\bigr)$ as $x \to \infty $. In this case $\mathcal{L}(x)$
    is called SV with remainder $\alpha (x)$; see {\cite[p.~185, condition SR3]{Bingham}}.
    Wherever we exploit the condition $\left[\mathcal{L}_{\alpha} \right]$ we will suppose that
\begin{equation}          \label{11}
    \alpha(x) = {o}\left( {{{\mathcal{L}\left(x\right)} \over {x^\nu }}} \right)
    \quad \parbox{2.4cm} {\textit{as} {} $x  \to \infty$.}
\end{equation}
    And also by perforce we suppose the condition
\begin{equation*}
    {{{\ell}\left( {\lambda x} \right)} \over {{\ell}(x)}}
    = 1 + \beta(x)     \eqno[\textsf {${\ell}_{\beta}$}]
\end{equation*}
    for each $\lambda > 0$, where
\begin{equation*}
    \beta(x) = {o}\left( {{{{\ell}\left(x\right)} \over {x^\delta}}} \right)
    \quad \parbox{2.4cm} {\textit{as} {} $x  \to \infty$.}
\end{equation*}

    Since $f_{n}(s)\uparrow 1$ for all $s \in [0, 1)$ in virtue of \eqref{1} it sufficiently
    to observe the case $i=0$  as $n \to \infty$. Write
\begin{equation*}
    {\mathcal P}_{n}(s)={\mathcal P}_{n}^{(0)}(s).
\end{equation*}

    The following theorem is generalization of the Theorem P1.

\begin{theorem}         \label{MyTh:1}
    Let conditions $\left[{f_\nu} \right]$, $\left[{h_\delta} \right]$
    hold. If $\delta  > \nu $ then
\begin{equation*}
    {\mathcal P}_n (s) \sim K(s)\exp \left\{ - {\int\limits_s^{f_n (s)}
    {{{1 - h(y)} \over{f(y)-y}}\Bigl[{1+ \delta (1-y)}\Bigr]dy}}\right\}
\end{equation*}
    as $n \to \infty $, where $K(s)$ is a bounded function for $s \in [0, 1)$ and
    $\delta (x)\to 0$ as $x\downarrow 0$. If in addition, the conditions
    $\left[\mathcal{L}_{\alpha} \right]$ and \eqref{10} are satisfied then
\begin{equation*}
    {\mathcal P}_n (s) \sim K(s)\exp \left\{ - {\int\limits_s^{f_n (s)}{{{1 - h(y)} \over
    {f(y)-y}}\Bigl[{1+{o}\bigl({{\Lambda}\left({1-y}\right)}\bigr)}\Bigr]dy}}\right\}
    \quad \parbox{2.4cm}{\textit{as} {} $n  \to \infty$.}
\end{equation*}
\end{theorem}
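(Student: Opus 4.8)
The plan is to start from the product representation \eqref{1} with $i=0$, namely $\mathcal{P}_n(s)=\prod_{k=0}^{n-1}h\bigl(f_k(s)\bigr)$, and pass to logarithms, so that
\[
\ln\mathcal{P}_n(s)=\sum_{k=0}^{n-1}\ln h\bigl(f_k(s)\bigr).
\]
The whole argument then rests on converting this sum into the integral appearing in the statement. The device is the one-step identity $f_{k+1}(s)-f_k(s)=f\bigl(f_k(s)\bigr)-f_k(s)$, which turns each summand into a Riemann contribution: writing $\phi(y):=\ln h(y)/\bigl(f(y)-y\bigr)$ one has $\ln h\bigl(f_k(s)\bigr)=\phi\bigl(f_k(s)\bigr)\cdot\bigl(f_{k+1}(s)-f_k(s)\bigr)$, whence
\[
\ln\mathcal{P}_n(s)=\int_s^{f_n(s)}\phi(y)\,dy+\sum_{k=0}^{n-1}\varepsilon_k(s),\qquad \varepsilon_k(s)=\int_{f_k(s)}^{f_{k+1}(s)}\bigl[\phi\bigl(f_k(s)\bigr)-\phi(y)\bigr]\,dy.
\]

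Next I would reorganise the main integral into the stated form. Setting $w=1-h(y)$ and using $\ln(1-w)=-w\bigl(1+\tfrac{w}{2}+\cdots\bigr)$ gives $\ln h(y)=-\bigl(1-h(y)\bigr)\bigl[1+\delta(1-y)\bigr]$ with $\delta(1-y):=-\ln h(y)/\bigl(1-h(y)\bigr)-1$; since $1-h(y)\to0$ as $y\uparrow1$, one has $\delta(x)\to0$ as $x\downarrow0$, which is exactly the error function named in the statement. Substituting this into $\int_s^{f_n(s)}\phi$ produces precisely $-\int_s^{f_n(s)}\frac{1-h(y)}{f(y)-y}\bigl[1+\delta(1-y)\bigr]\,dy$.

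The technical heart is to show that $\sum_{k\ge0}\varepsilon_k(s)$ converges and defines a bounded factor, so that $K(s)=\exp\bigl\{\sum_{k\ge0}\varepsilon_k(s)\bigr\}$ settles the first claim. Here I would estimate $|\varepsilon_k(s)|\le\bigl(f_{k+1}(s)-f_k(s)\bigr)^2\sup|\phi'|$ on $[f_k(s),f_{k+1}(s)]$ and feed in the regular-variation asymptotics: by $[f_\nu]$ one has $f(y)-y=(1-y)\Lambda(1-y)$, by $[h_\delta]$ one has $1-h(y)=(1-y)^\delta\ell\bigl(1/(1-y)\bigr)$, and by \eqref{2} the iterates satisfy $1-f_k(s)\asymp k^{-1/\nu}$ up to slow variation. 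A direct computation should then yield $|\varepsilon_k(s)|=\mathcal{O}\bigl((1-f_k(s))^{\delta+\nu}\mathcal{L}\,\ell\bigr)$, summable since the exponent translates into a term of order $k^{-\delta/\nu-1}$; the hypothesis $\delta>\nu$ is what makes $\int_s^1\phi(y)\,dy$ itself finite, i.e. guarantees the ergodic regime in which the equivalence is meaningful. I expect this error analysis --- in particular obtaining summability \emph{uniformly} in $s\in[0,1)$, so that $K(s)$ is genuinely bounded --- to be the main obstacle, because $\phi(y)\to-\infty$ as $y\uparrow1$ and one must control the oscillation of a slowly varying factor rather than that of a monotone one.

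Finally, for the sharpened statement I would refine the size of the correction. From $\delta(1-y)\sim\tfrac12\bigl(1-h(y)\bigr)=\tfrac12(1-y)^\delta\ell\bigl(1/(1-y)\bigr)$ and $\Lambda(1-y)=(1-y)^\nu\mathcal{L}\bigl(1/(1-y)\bigr)$ one obtains $\delta(1-y)/\Lambda(1-y)\asymp(1-y)^{\delta-\nu}\ell/\mathcal{L}\to0$ as $y\uparrow1$, which gives $\delta(1-y)=o\bigl(\Lambda(1-y)\bigr)$; the remainder hypotheses $[\mathcal{L}_\alpha]$, \eqref{11} (and $[\ell_\beta]$) enter to control the slowly varying ratios with the required precision and to ensure that the residual terms produced by the Riemann-sum step can likewise be absorbed into the $o\bigl(\Lambda(1-y)\bigr)$ factor rather than merely into $K(s)$.
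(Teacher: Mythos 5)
Your outline is correct, and it is the natural (and almost certainly intended) route: the paper prints no proof of Theorem~\ref{MyTh:1} (only Lemma~\ref{MyLem:3} is proved explicitly), but the theorem is presented as a refinement of Pakes' Theorem~P1, whose underlying method is exactly your sum-to-integral device. Starting from \eqref{1} with $i=0$, writing $\ln\mathcal{P}_n(s)=\sum_{k<n}\ln h\bigl(f_k(s)\bigr)$, converting each summand via $f_{k+1}(s)-f_k(s)=f\bigl(f_k(s)\bigr)-f_k(s)$ into $\int_{f_k(s)}^{f_{k+1}(s)}\phi(y)\,dy+\varepsilon_k(s)$ with $\phi(y)=\ln h(y)/\bigl(f(y)-y\bigr)$, and then trading $\ln h(y)$ for $-\bigl(1-h(y)\bigr)\bigl[1+\delta(1-y)\bigr]$ is precisely the right skeleton, and your error bound $|\varepsilon_k(s)|=\mathcal{O}\bigl(R_k^{\delta+\nu}\mathcal{L}\,\ell\bigr)$, of order $k^{-\delta/\nu-1}$, is what the computation gives. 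Two technical points you flag can indeed be closed by standard tools: the bound on $\phi'$ needs the asymptotics $1-f'(y)\sim(1+\nu)(1-y)^{\nu}\mathcal{L}$ and $h'(y)\sim\delta(1-y)^{\delta-1}\ell$, which follow from the monotone density theorem since $f'$ and $h'$ are monotone; and uniformity in $s$ follows from $R_k(s)\le Q_k$ together with Potter-type bounds for the regularly varying function $x\mapsto x^{\delta+\nu}\mathcal{L}(1/x)\ell(1/x)$, after which \eqref{2} gives a summable majorant independent of $s$.

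One structural difference is worth recording. You absorb the whole discretization error into $K(s)=\exp\bigl\{\sum_{k\ge0}\varepsilon_k(s)\bigr\}$, so your bracket carries only the conversion error $\delta(1-y)\sim\tfrac12\bigl(1-h(y)\bigr)$; since $(1-y)^{\delta}\ell=o\bigl(\Lambda(1-y)\bigr)$ whenever $\delta>\nu$, your argument delivers the second, refined form of the theorem \emph{without using} $\left[\mathcal{L}_{\alpha}\right]$ or \eqref{11} at all (you correctly read the theorem's reference to \eqref{10} as a misprint for \eqref{11}). The structure of the paper's statement indicates a proof in which the per-step discretization error --- which is of relative order $\Lambda(1-y)$, not smaller --- is kept \emph{inside} the bracket rather than exported to $K(s)$; that is why its first claim asserts only $\delta(x)\to0$ and why the slow-variation-with-remainder hypotheses are invoked to sharpen $\mathcal{O}\bigl(\Lambda(1-y)\bigr)$ to $o\bigl(\Lambda(1-y)\bigr)$. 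Your organization is cleaner and proves the two claims as literally stated (they are existential in $K$ and in the bracket); the price is that it reveals the additional hypotheses of the second claim to be superfluous for that literal statement, so the content the paper attaches to them lives in its own decomposition, not in yours.
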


\begin{corollary}             \label{MyCor:1}
    Let conditions $\left[{f_\nu} \right]$, $\left[{h_\delta} \right]$ hold. If $\delta > \nu$ then
\begin{equation*}
    p_{00}^{(n)} \sim A \exp \left\{-{{{\mathcal{N}}^{\,\nu}(n)}\over{\delta - \nu}}
    \cdot {\ell}\left( {{{(\nu n)^{{1 \mathord{\left/{\vphantom {1 \nu }} \right.
    \kern-\nulldelimiterspace} \nu }}} \over {{\mathcal{N}}(n)}}} \right)\right\}
    \quad \parbox{2.4cm} {\textit{as} {} $n  \to \infty$,}
\end{equation*}
    where $A$ is a positive constant and ${\mathcal{N}}(x)$
    is SV at infinity defined in \eqref{6}.
\end{corollary}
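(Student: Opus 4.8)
The plan is to specialise Theorem~\ref{MyTh:1} to $s=0$ and to read off the $n$-dependence of the exponential. Since ${\mathcal P}_n(0)=p_{00}^{(n)}$ and $f_n(0)=1-Q_n$, the first (basic) form of the theorem reads
\[
p_{00}^{(n)}\sim K(0)\exp\left\{-\int_0^{\,1-Q_n}\frac{1-h(y)}{f(y)-y}\bigl[1+\delta(1-y)\bigr]\,dy\right\},
\]
where $\delta(1-y)\to 0$ as $y\uparrow 1$. First I would substitute $u=1-y$ and insert $\left[ f_\nu \right]$ and $\left[ h_\delta \right]$; this turns the integrand into $u^{\,\delta-1-\nu}\bigl(\ell(1/u)/\mathcal{L}(1/u)\bigr)\bigl[1+\delta(u)\bigr]$, a regularly varying function of $u$ of index $\delta-1-\nu$. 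Because $\delta>\nu$ this index lies in $(-1,0)$, so the integrand is integrable at $u=0$ and the vanishing factor $[1+\delta(u)]=1+o(1)$ does not affect its regularly varying behaviour there.

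The main analytic step is to isolate the $n$-dependent part by Karamata's theorem on integrals of regularly varying functions. Writing $\int_{Q_n}^{1}=\int_0^{1}-\int_0^{Q_n}$, the integral over a fixed neighbourhood of $u=1$ contributes only a finite constant, which together with the bounded factor $K(0)$ I will absorb into the positive constant $A$. To the remaining piece $\int_0^{Q_n}u^{\,\delta-\nu-1}\bigl(\ell(1/u)/\mathcal{L}(1/u)\bigr)\,du$ I would apply Karamata (here $\delta-\nu>0$), obtaining the equivalence to $\dfrac{Q_n^{\,\delta-\nu}}{\delta-\nu}\cdot\dfrac{\ell(1/Q_n)}{\mathcal{L}(1/Q_n)}$ as $n\to\infty$.

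Finally I would rewrite this $Q_n$-expression through the normalisation of Lemma~\ref{MyLem:1}. Since $Q_n=R_n(0)=\mathcal{N}(n)/(\nu n)^{1/\nu}$, its reciprocal equals $(\nu n)^{1/\nu}/\mathcal{N}(n)$, which is exactly the point at which $\ell$ is evaluated in the Corollary; moreover \eqref{6} gives $\mathcal{L}(1/Q_n)\sim\mathcal{N}^{-\nu}(n)$. Substituting these relations lets the powers of $Q_n$ and $\mathcal{L}^{-1}(1/Q_n)$ recombine into the prefactor $\mathcal{N}^{\nu}(n)/(\delta-\nu)$ of the statement, with $\ell$ retaining the argument $(\nu n)^{1/\nu}/\mathcal{N}(n)$; exponentiating the finite part of the integral and multiplying by $K(0)$ then yields the positive constant $A$.

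The step I expect to be the main obstacle is this last translation together with the Karamata estimate that feeds it. One must verify that the remainder in Karamata's theorem and the correction $\delta(1-y)$ are genuinely negligible at the scale $Q_n^{\,\delta-\nu}\ell(1/Q_n)/\mathcal{L}(1/Q_n)$, and that the several $O(1)$ contributions separated off along the way---the integral away from $u=0$, the factor $K(0)$, and the multiplicative error in \eqref{6}---combine into a single positive constant $A$ uniformly in $n$. Matching the precise slowly varying argument $(\nu n)^{1/\nu}/\mathcal{N}(n)$ through \eqref{6} is the delicate point, since it is there that the exact form of the exponent is fixed.
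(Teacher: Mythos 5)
Your overall route---setting $s=0$ in Theorem~\ref{MyTh:1}, substituting $u=1-y$, splitting the integral, and applying Karamata's theorem to the lower tail---is the natural one (the paper offers no separate proof of the Corollary, presenting it as a direct consequence of Theorem~\ref{MyTh:1}), and your Karamata estimate itself is correct: since $\delta-\nu-1\in(-1,0)$,
\begin{equation*}
\int_0^{Q_n}u^{\delta-\nu-1}\,\frac{\ell(1/u)}{\mathcal{L}(1/u)}\,du
\sim\frac{Q_n^{\delta-\nu}}{\delta-\nu}\cdot\frac{\ell(1/Q_n)}{\mathcal{L}(1/Q_n)}
\quad \textit{as} \quad n\to\infty .
\end{equation*}
The genuine gap is the final ``recombination'' step, which you flag as delicate but then assert works: it does not. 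First, the sign: since the exponent in Theorem~\ref{MyTh:1} is $-\int_{Q_n}^{1}=-\int_0^1+\int_0^{Q_n}$, the Karamata tail enters the exponent with a \emph{plus} sign (consistent with $p_{00}^{(n)}=\prod_{k<n}h(f_k(0))$ decreasing to $\pi_0$ from above), whereas the Corollary has a minus sign. Second, the algebra: substituting $Q_n=\mathcal{N}(n)/(\nu n)^{1/\nu}$ and $\mathcal{L}(1/Q_n)\sim\mathcal{N}^{-\nu}(n)$ from \eqref{6} gives
\begin{equation*}
\frac{Q_n^{\delta-\nu}}{\delta-\nu}\cdot\frac{\ell(1/Q_n)}{\mathcal{L}(1/Q_n)}
\sim\frac{\mathcal{N}^{\delta}(n)}{(\delta-\nu)\,(\nu n)^{\delta/\nu-1}}\,
\ell\left(\frac{(\nu n)^{1/\nu}}{\mathcal{N}(n)}\right),
\end{equation*}
not $\frac{\mathcal{N}^{\nu}(n)}{\delta-\nu}\,\ell\bigl((\nu n)^{1/\nu}/\mathcal{N}(n)\bigr)$: the factor $Q_n^{\delta-\nu}=\mathcal{N}^{\delta-\nu}(n)/(\nu n)^{\delta/\nu-1}\to 0$ does not cancel against anything, so the powers of $Q_n$ cannot ``recombine'' into the stated prefactor.

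Carried out honestly, your argument therefore proves
$p_{00}^{(n)}\sim A\exp\bigl\{+\mathcal{N}^{\delta}(n)\,\ell\bigl((\nu n)^{1/\nu}/\mathcal{N}(n)\bigr)\big/\bigl[(\delta-\nu)(\nu n)^{\delta/\nu-1}\bigr]\bigr\}$, whose exponent tends to $0$; via $e^x\approx 1+x$ this is precisely the expansion appearing in Theorem~\ref{MyTh:2} and its corollary, but it is \emph{not} the displayed formula of Corollary~\ref{MyCor:1}, whose exponent is $\sim-\frac{1}{\delta-\nu}\,\ell(1/Q_n)/\mathcal{L}(1/Q_n)$ and in general does not vanish. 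The printed exponent corresponds instead to keeping the \emph{whole} integral $-\int_{Q_n}^1$ with the slowly varying ratio frozen at its endpoint value $\ell(1/Q_n)/\mathcal{L}(1/Q_n)$ and integrating the bare power, $\int_{Q_n}^1 u^{\delta-\nu-1}du\to 1/(\delta-\nu)$; in that reading $A$ is merely a bounded prefactor rather than $\lim p_{00}^{(n)}$. You should either derive the Corollary along those lines (accepting the minus sign and the constant-order exponent), or state explicitly that your sharper tail computation yields a different---and in fact more informative---formula; as written, the last paragraph of your sketch asserts an identification that is false, so the proof is incomplete exactly at its decisive step.
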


    We make sure that at the conditions of second part of Theorem~\ref{MyTh:1} PGF ${\mathcal P}_n(s)$
    converges to a limit ${\pi}(s)$ which we denote by the power series representation
\begin{equation*}
    {\pi}(s)=\sum\limits_{j \in \mathcal{S}}{\pi_j}{s^j}.
\end{equation*}
    In our conditions we can establish a speed rate of this convergence.

\begin{theorem}             \label{MyTh:2}
    Let conditions $\left[{f_\nu} \right]$, $\left[{h_\delta} \right]$ hold and $\delta >\nu$.
    Then ${\mathcal P}_n (s)$ converges to ${\pi}(s)$ which generates the invariant measures
    $\left\{{\pi}_j\right\}$ for GWPI. The convergence is uniform over compact subsets of the
    open unit disc. If in addition, the conditions $\left[\mathcal{L}_{\alpha} \right]$,
    \eqref{10} and $\left[{\ell}_{\beta}\right]$ are fulfilled then
\begin{equation*}
    {\mathcal P}_n (s)={\pi}(s)\left(1+{\Delta_n (s)}{{\mathcal{N}}_{\delta}}
    \left({{1}\over {R_n (s)}}\right)\right),
\end{equation*}
    where ${\mathcal{N}}_{\delta}(x)={\mathcal{N}}^{\delta}(x) {\ell (x)}$,
    the function ${\mathcal{N}}(x)$ is defined in \eqref{6} and
\begin{equation*}
    \Delta_n (s) = {{1}\over {\delta -\nu}}\,{{{1}} \over {\bigl(\nu_{n}(s)\bigr)^{{{\delta}
    \mathord{\left/ {\vphantom {{\delta}{\nu}}} \right. \kern-\nulldelimiterspace} \nu}-1}}}
    - {{1+\nu} \over {2\nu}}\,{{\ln \bigl[\nu_n(s)\bigr]} \over {\bigl(\nu_{n}(s)\bigr)^{{{\delta}
    \mathord{\left/ {\vphantom {{\delta}{\nu}}} \right. \kern-\nulldelimiterspace} \nu}}}}
    \bigl(1+o(1)\bigr)
\end{equation*}
    as $n \to \infty$ and $\nu_n(s)=\nu{n}+{\Lambda}^{-1}\left({1-s}\right)$.
\end{theorem}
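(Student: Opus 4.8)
The idea is to read off both assertions from Theorem~\ref{MyTh:1}. Under $\left[f_\nu\right]$ and $\left[h_\delta\right]$ the integrand appearing there behaves near $y=1$ like
\begin{equation*}
    \frac{1-h(y)}{f(y)-y}=(1-y)^{\delta-1-\nu}\,\frac{\ell\bigl(1/(1-y)\bigr)}{\mathcal{L}\bigl(1/(1-y)\bigr)},
\end{equation*}
which is regularly varying of index $\delta-1-\nu>-1$ precisely because $\delta>\nu$; hence the improper integral over $[s,1)$ converges and
\begin{equation*}
    {\pi}(s):=K(s)\exp\left\{-\int_s^1\frac{1-h(y)}{f(y)-y}\Bigl[1+o\bigl(\Lambda(1-y)\bigr)\Bigr]\,dy\right\}
\end{equation*}
is finite for each $s\in[0,1)$. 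Letting $f_n(s)\uparrow1$ in Theorem~\ref{MyTh:1} yields $\mathcal{P}_n(s)\to{\pi}(s)$. To see that ${\pi}$ generates an invariant measure I would pass to the limit in the one-step identity $\mathcal{P}_{n+1}(s)=h(s)\,\mathcal{P}_n\bigl(f(s)\bigr)$, which is immediate from \eqref{1}, obtaining the invariance equation ${\pi}(s)=h(s)\,{\pi}\bigl(f(s)\bigr)$. Since the $\mathcal{P}_n$ are analytic, uniformly bounded by $1$ on the open unit disc and convergent on $[0,1)$, the Vitali--Porter theorem upgrades this to uniform convergence on compact subsets.

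For the rate I would use the sharper second form of Theorem~\ref{MyTh:1} to write
\begin{equation*}
    \frac{\mathcal{P}_n(s)}{{\pi}(s)}=\exp\left\{\int_{f_n(s)}^1\frac{1-h(y)}{f(y)-y}\Bigl[1+o\bigl(\Lambda(1-y)\bigr)\Bigr]\,dy\right\}=:\exp\bigl\{I_n(s)\bigr\},
\end{equation*}
so everything reduces to expanding the tail integral $I_n(s)$. The substitution $x=1/(1-y)$ recasts it as $\int_{1/R_n(s)}^{\infty}x^{-(1+\delta-\nu)}\bigl(\ell(x)/\mathcal{L}(x)\bigr)\,dx$, to which Karamata's theorem applies since $1+\delta-\nu>1$, giving the leading term $\tfrac{1}{\delta-\nu}R_n(s)^{\delta-\nu}\,\ell\bigl(1/R_n(s)\bigr)/\mathcal{L}\bigl(1/R_n(s)\bigr)$. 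Writing $x=1/R_n(s)$ and using $x^{\nu}/\mathcal{L}(x)=1/\Lambda\bigl(R_n(s)\bigr)$ together with the identification of the slowly varying factor furnished by \eqref{6} (which amounts to $\mathcal{N}^{\nu}(x)\,\mathcal{L}(x)\to1$), this leading term equals $\tfrac{1}{\delta-\nu}\,\nu_n(s)^{1-\delta/\nu}\,\mathcal{N}_\delta\bigl(1/R_n(s)\bigr)$, i.e.\ the first summand of $\Delta_n(s)\mathcal{N}_\delta\bigl(1/R_n(s)\bigr)$. Because $I_n(s)\to0$, the expansion $e^z=1+z\bigl(1+o(1)\bigr)$ then produces $\mathcal{P}_n(s)={\pi}(s)\bigl(1+I_n(s)(1+o(1))\bigr)$.

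The crux is the logarithmic correction $-\tfrac{1+\nu}{2\nu}\nu_n(s)^{-\delta/\nu}\ln\nu_n(s)\,\mathcal{N}_\delta\bigl(1/R_n(s)\bigr)$, which is invisible to the leading Karamata asymptotics and comes instead from the second-order relation between $R_n(s)$ and $\nu_n(s)$. From $\left[f_\nu\right]$ the iteration obeys $R_{n+1}=R_n-R_n^{1+\nu}\mathcal{L}(1/R_n)$, whence
\begin{equation*}
    R_{n+1}^{-\nu}-R_n^{-\nu}=\nu\,\mathcal{L}(1/R_n)+\frac{\nu(1+\nu)}{2}\,R_n^{\nu}\mathcal{L}^2(1/R_n)+\cdots.
\end{equation*}
Summing and inserting $R_k^{\nu}\sim1/(\nu k\mathcal{L})$ from \eqref{2}, the quadratic term contributes $\tfrac{1+\nu}{2}\mathcal{L}\ln n$, so that, with $\varepsilon_n:=\tfrac{1+\nu}{2}\ln\nu_n(s)/\nu_n(s)$,
\begin{equation*}
    \frac{1}{\Lambda\bigl(R_n(s)\bigr)}=\nu_n(s)\bigl(1+\varepsilon_n(1+o(1))\bigr),
\end{equation*}
a refinement of \eqref{10}. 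Feeding this into the Karamata estimate of $I_n(s)$, the factor $x^{\nu}/\mathcal{L}(x)$ carries $(1+\varepsilon_n)$ and $x^{-\delta}$ carries $(1+\varepsilon_n)^{-\delta/\nu}$, which combine into $(1+\varepsilon_n)^{1-\delta/\nu}$; its linearization $-\tfrac{\delta-\nu}{\nu}\varepsilon_n$ multiplies the leading term and reproduces exactly the stated second summand of $\Delta_n(s)$.

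The main obstacle is analytic rather than algebraic: one must justify replacing the slowly varying functions $\ell,\mathcal{L}$ at the running variable by their values at the endpoint $1/R_n(s)$ committing only a relative error $o\bigl(\ln\nu_n(s)/\nu_n(s)\bigr)$, so that the $\ln\nu_n(s)$ term is not swamped. This is exactly what the remainder hypotheses $\left[\mathcal{L}_\alpha\right]$ with \eqref{11} and $\left[\ell_\beta\right]$ are designed to control, and verifying the required uniform bounds on $\alpha(\cdot)$ and $\beta(\cdot)$ along the integration range is the delicate part. Assembling the two orders then gives the announced expression for $\Delta_n(s)$.
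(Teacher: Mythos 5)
A preliminary caveat: the paper gives no proof of Theorem~\ref{MyTh:2} at all (nor of Theorem~\ref{MyTh:1}); only Lemma~\ref{MyLem:3} comes with an argument. So your sketch can only be judged on its own merits against the machinery the paper sets up. Its first half is sound: under $\left[f_\nu\right]$, $\left[h_\delta\right]$ the integrand equals $(1-y)^{\delta-\nu-1}\ell\bigl(1/(1-y)\bigr)/\mathcal{L}\bigl(1/(1-y)\bigr)$, integrable at $y=1$ precisely when $\delta>\nu$; the identity $\mathcal{P}_{n+1}(s)=h(s)\,\mathcal{P}_n\bigl(f(s)\bigr)$ does follow from \eqref{1} and passes to the limit to give invariance; Vitali--Porter applies. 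The target algebra of your rate computation is also correct: the exact recursion $R_{n+1}=R_n\bigl(1-\Lambda(R_n)\bigr)$ yields $1/\Lambda\bigl(R_n(s)\bigr)=\nu_n(s)\bigl(1+\tfrac{1+\nu}{2}\tfrac{\ln\nu_n(s)}{\nu_n(s)}(1+o(1))\bigr)$, and linearizing $(1+\varepsilon_n)^{1-\delta/\nu}$ against the Karamata leading term reproduces both summands of $\Delta_n(s)$ with the right constants.

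The serious gap is the anchor of the whole rate argument. You write $\mathcal{P}_n(s)/\pi(s)=\exp\{I_n(s)\}$ as an identity, but Theorem~\ref{MyTh:1} asserts only $\mathcal{P}_n(s)\sim K(s)\exp\{-\int_s^{f_n(s)}\cdots\}$, i.e.\ equality up to an unquantified factor $1+\epsilon_n(s)$ with $\epsilon_n(s)\to0$. The quantity you must isolate, $I_n(s)\asymp n^{1-\delta/\nu}\times(\text{slowly varying})$, itself tends to $0$, so the hidden $\epsilon_n(s)$ can be arbitrarily larger than it; a second-order expansion can never be a corollary of a bare ``$\sim$''. The repair is to abandon Theorem~\ref{MyTh:1} as the starting point and use the exact product from \eqref{1}: $\mathcal{P}_n(s)=\prod_{k<n}h\bigl(f_k(s)\bigr)$ and $\pi(s)=\prod_{k\ge0}h\bigl(f_k(s)\bigr)$ (convergent since $\sum_k\bigl(1-h(f_k(s))\bigr)<\infty$ when $\delta>\nu$), whence $\mathcal{P}_n(s)/\pi(s)=\exp\bigl\{-\sum_{k\ge n}\ln h\bigl(f_k(s)\bigr)\bigr\}$ holds exactly; your Karamata and second-order analysis then applies to the tail sum $\sum_{k\ge n}\bigl(1-h(f_k(s))\bigr)$, the squared terms contributing only $O\bigl(n^{1-2\delta/\nu}\bigr)=o\bigl(n^{-\delta/\nu}\bigr)$.

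Second, what you set aside as ``the delicate part'' is the substance of the theorem, not a uniformity check. The constant $\tfrac{1+\nu}{2}$ in your refinement of \eqref{10} is correct only because \eqref{11} forbids $\mathcal{L}$ from drifting along the orbit. Indeed, take $\nu=1$ and $\mathcal{L}(x)=b-c/x+o(1/x)$ with $c\neq0$ (a finite, nonzero third moment): then $\alpha(x)\asymp1/x$, so \eqref{11} fails, and summing the same recursion gives $1/\Lambda(Q_n)=n+\bigl(1-c/b^2\bigr)\ln n+O(1)$ --- a different coefficient of $\ln n$. This is consistent with the paper's closing Remark, where under $f'''(1-)<\infty$ one only gets an \emph{analogous}, not identical, expansion. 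A complete proof therefore must run a remainder-form Karamata argument (condition SR3 of Bingham et al.) uniformly along $\{R_k(s):k\ge n\}$, showing the accumulated drift of $\mathcal{L}$ and $\ell$ is $o\bigl(\ln\nu_n(s)/\nu_n(s)\bigr)$ in relative terms; your sketch names this obstacle but gives no indication of how $\alpha$ and $\beta$ enter quantitatively, and without that the $\ln\nu_n(s)$ term --- the only content of the second assertion beyond the leading Karamata asymptotics --- is not established.
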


    The following result is direct consequence of Theorem~\ref{MyTh:2}.
\begin{corollary}
    If conditions of Theorem~\ref{MyTh:2} hold then
\begin{equation*}
    p_{00}^{(n)} ={\pi}_{0}\cdot\Bigl(1+{\Delta_n}{{\mathcal{N}}_{\delta}}\left(n\right)\Bigr),
\end{equation*}
    where ${\mathcal{N}_{\delta}(n)}$ is SV at infinity and
\begin{equation*}
    \Delta_n = {{1}\over {\delta -\nu}}\,{{{1}} \over {\left(\nu{n}\right)^{{{\delta}
    \mathord{\left/{\vphantom {{\delta}{\nu}}}\right. \kern-\nulldelimiterspace}\nu}-1}}}
    - {{1+\nu} \over {2\nu}}\,{{\ln n} \over {\left(\nu{n}\right)^{{{\delta}
    \mathord{\left/{\vphantom {{\delta}{\nu}}}\right. \kern-\nulldelimiterspace}\nu}}}} \bigl(1+o(1)\bigr)
    \quad \parbox{2.4cm} {\textit{as} {} $n  \to \infty$.}
\end{equation*}
\end{corollary}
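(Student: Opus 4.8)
The plan is to obtain this corollary as the specialization of Theorem~\ref{MyTh:2} to the point $s=0$, followed by a simplification of the two $n$-dependent factors. Since ${\mathcal P}_n(0)=p_{00}^{(n)}$ and ${\pi}(0)={\pi}_0$, putting $s=0$ in the representation furnished by Theorem~\ref{MyTh:2} gives
\[
p_{00}^{(n)}={\pi}_0\left(1+\Delta_n(0)\,{\mathcal N}_\delta\!\left(\frac{1}{R_n(0)}\right)\right),
\]
and by definition $R_n(0)=Q_n$. Hence the whole task reduces to identifying ${\mathcal N}_\delta(1/Q_n)$ with a slowly varying function of $n$ and to reducing $\Delta_n(0)$ to the displayed expression for $\Delta_n$.

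First I would treat the slowly varying factor. Setting $s=0$ in \eqref{5} and using $U_n(0)=0$ (the last property listed in Lemma~\ref{MyLem:1}) yields $Q_n={\mathcal N}(n)/(\nu n)^{1/\nu}$, whence
\[
\frac{1}{Q_n}=\frac{(\nu n)^{1/\nu}}{{\mathcal N}(n)}.
\]
As a function of $n$ this is regularly varying of positive index $1/\nu$; composing it with the SV function ${\mathcal N}_\delta(x)={\mathcal N}^{\delta}(x)\ell(x)$ therefore produces a function that is again SV in $n$ (the standard fact being that $L\bigl(\phi(n)\bigr)$ is SV whenever $L$ is SV and $\phi$ is regularly varying with positive index; see \cite{Bingham}). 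This composed object is exactly what the statement abbreviates by ${\mathcal N}_\delta(n)$, so the identity ${\mathcal N}_\delta(n)={\mathcal N}_\delta(1/Q_n)$ is to be read as a definition.

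Next I would reduce $\Delta_n(0)$. From $\nu_n(s)=\nu n+{\Lambda}^{-1}(1-s)$ we get $\nu_n(0)=\nu n+c_0$ with $c_0:={\Lambda}^{-1}(1)$ a finite constant, so $\nu_n(0)=\nu n\bigl(1+O(1/n)\bigr)$. Substituting this into the two terms of $\Delta_n(s)$, expanding $\bigl(1+c_0/(\nu n)\bigr)$ by Taylor, and using $\ln(\nu n+c_0)=\ln n+\ln\nu+O(1/n)=\ln n\,(1+o(1))$, the dominant term becomes
\[
\frac{1}{\delta-\nu}\,\frac{1}{(\nu n)^{\delta/\nu-1}}\bigl(1+O(1/n)\bigr),
\]
while the subtracted term becomes $\dfrac{1+\nu}{2\nu}\,\dfrac{\ln n}{(\nu n)^{\delta/\nu}}\bigl(1+o(1)\bigr)$, which is precisely the claimed form of $\Delta_n$.

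The one point requiring genuine care — and the step I expect to be the main obstacle — is checking that the constant shift $c_0={\Lambda}^{-1}(1)$ does not contaminate the second-order $\ln n$ term. The $O(1/n)$ relative error incurred by replacing $(\nu n+c_0)$ with $\nu n$ in the dominant first term contributes a quantity of order $n^{-\delta/\nu}$, which is smaller by a factor $\ln n$ than the genuine second-order term $\ln n\cdot(\nu n)^{-\delta/\nu}$; it is therefore absorbed into the $(1+o(1))$ attached to the second term, and the displayed two-term expansion for $\Delta_n$ is preserved exactly as in Theorem~\ref{MyTh:2}. Finally, since $\delta>\nu$ the product $\Delta_n\,{\mathcal N}_\delta(n)$ tends to $0$ (a negative power of $n$ overpowering a slowly varying factor), which is consistent with $p_{00}^{(n)}\to{\pi}_0$ and completes the deduction.
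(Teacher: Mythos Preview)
Your approach is correct and matches the paper's intent: the paper gives no proof at all, stating only that the corollary is a ``direct consequence'' of Theorem~\ref{MyTh:2}, and specialization to $s=0$ is exactly that consequence. Your handling of the two reductions---recognizing ${\mathcal N}_\delta(1/Q_n)$ as slowly varying in $n$ via $Q_n={\mathcal N}(n)/(\nu n)^{1/\nu}$, and absorbing the constant shift $c_0=\Lambda^{-1}(1)$ in $\nu_n(0)$ into the $(1+o(1))$ of the second-order term---is sound and more detailed than anything the paper provides.
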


\begin{remark}
    The analogous result as in Theorem \ref{MyTh:2}  has been proved
    in {\cite{Imomov15}} provided that $\delta =1$ and $f'''(1-)<\infty$.
\end{remark}


\begin{thebibliography}{99}

\bibitem{AsHer}
Asmussen~S., Hering~H.~(1983). {\sl Branching processes}. Birkh\"{a}user, Boston.

\bibitem{Bingham}
Bingham~N.~H., Goldie~C.~M., Teugels~J.~L.~(1987). {\sl Regular Variation}. Univ.
Press, Cambridge.

\bibitem{He65}
Heatcote~C.~R.~(1965). A branching process allowing
immigration. {\sl Jour. Royal Stat. Soc.} Vol.~{\bf B-27}, pp.~138--143.

\bibitem{Imomov18}
Imomov~A.~A.~(2018). On a limit structure of the Galton-Watson
branching processes with regularly varying generating functions.
{\sl Prob. and Math. Stat.}, {\bf available online, to appear}.

\bibitem{Imomov15}
Imomov~A.~A.~(2015). On long-time behaviors of states of
    Galton-Watson Branching Processes allowing Immigration.
{\sl J Siber. Fed. Univ.: Math. Phys.} Vol.~{\bf 8(4)}, pp.~394--405.

\bibitem{Pakes79}
Pakes~A.~G.~(1979). Limit theorems for the simple
branching process allowing immigration, I.
The case of finite offspring mean.
{\sl Adv. Appl. Prob.} Vol.~{\bf 11}, pp.~31--62.

\bibitem{PakesMB75}
Pakes~A.~G.~(1975). Some results for non-supercritical
Galton-Watson process with immigration.
{\sl Math. Biosci.} Vol.~{\bf 24}, pp.~71--92.

\bibitem{Pakes71a}
Pakes~A.~G.~(1971). On the critical Galton-Watson
process with immigration.
{\sl Jour. Austral. Math. Soc.} Vol.~{\bf 12}, pp.~476--482.

\bibitem{Pakes71b}
Pakes~A.~G.~(1971). Branching processes with immigration.
{\sl Jour. Appl. Prob.} Vol.~{\bf 8(1)}, pp.~32--42.

\bibitem{SenetaRV}
Seneta~E.~(1972). {\sl Regularly Varying Functions}. Springer, Berlin.

\bibitem{Sen70}
Seneta~E.~(1970).  An explicit-limit theorem for the
critical Galton-Watson process with immigration.
{\sl Jour. Royal Stat. Soc.} Vol.~{\bf B-32(1)}, pp.~149--152.

\bibitem{Sen69}
Seneta~E.~(1969). Functional equations and the
Galton-Watson process.
{\sl Adv. Appl. Prob.} Vol.~{\bf 1}, pp.~1--42.

\bibitem{Slack72}
Slack~R.~S.~(1972). Further notes on branching processes with mean 1.
{\sl Wahrscheinlichkeitstheor. und Verv. Geb.} Vol.~{\bf 25}, pp.~31--38.

\end{thebibliography}
\end{document}